\newtheorem{theorem}{Theorem}[section]
\newtheorem{lemma}[theorem]{Lemma}
\newtheorem{corollary}[theorem]{Corollary}
\theoremstyle{definition}
\theoremstyle{definitions}
\newtheorem{remark}[theorem]{Remark}
\theoremstyle{notations}
\theoremstyle{remarks}
\newcommand{\T}{\mathrm}
\newcommand{\Z}{\mathbb{Z}}
\newcommand{\fm}{\frak{m}}
\newcommand{\N}{\mathbb{N}}
\newcommand{\lo}{\longrightarrow}
\begin{document}
\author[]
{S. Akbari $^{1,*}$, E. Estaji$^2$ and M.R. Khorsandi$^3$}
\title[]
{On the unit graph of a non-commutative ring}
\subjclass[2000]{05C25, 13E10} \keywords{Unit graph, Complete $r$-partite graph, Clique number}
\thanks{$^*$Corresponding author}
\thanks{E-mail addresses: s\_akbari@sharif.edu, ehsan.estaji@hotmail.com and khorsandi@shahroodut.ac.ir}
\maketitle

\begin{center}
{\it $^1$Department of Mathematics, Sharif University of Technology,\\ P.O. Box 11155-9415, Tehran, Iran
and\\
School of Mathematics, Institute for Research in Fundamental Sciences (IPM),
P.O. Box 19395-5746, Tehran, Iran.\\
$^2$Department of Pure Mathematics, Ferdowsi University of Mashhad,\\
P.O. Box 1159-91775, Mashhad, Iran.\\
$^3$Department of Mathematics, Shahrood University of Technology,\\ P.O. Box 3619995161-316,
Shahrood, Iran
}
\end{center}

\vspace{0.4cm}
\begin{abstract}
Let $R$ be a ring (not necessary commutative) with non-zero identity. The unit graph of $R$, denoted by $G(R)$, is a graph with elements of $R$ as its vertices and two distinct vertices $a$ and $b$ are adjacent if and only if $a+b$ is a unit element of $R$.
It was proved that if $R$ is a commutative ring and $\fm$ is a maximal ideal of $R$ such that $|R/\fm|=2$, then $G(R)$ is a complete bipartite graph if and only if $(R, \fm)$ is a local ring. In this paper we generalize this result by showing that if $R$ is a ring (not necessary commutative), then $G(R)$ is a complete $r$-partite graph if and only if $(R, \fm)$ is a local ring and $r=|R/m|=2^n$, for some $n \in \N$ or $R$ is a finite field. Among other results we show that if $R$ is a left Artinian ring, $2 \in U(R)$ and the clique number of $G(R)$ is finite, then $R$ is a finite ring.
\end{abstract}
\vspace{0.5cm}
\section{Introduction}
One of the interesting and active area in the last decade is using graph theoretical tools to study the algebraic structures. There are several papers are devoted to study of rings in this approach (cf. \cite{A+K+M+M}, \cite{A+B}, \cite{A+L}, \cite{Wu1}, \cite{S+B}, \cite{Wu2} and \cite{Wu3}). The {\it unit graph} of $R$, denoted by $G(R)$, is a graph whose vertices are elements of $R$ and two distinct vertices $a$ and $b$ are adjacent if and only if $a+b$ is a unit element of $R$.
Chung and Grimaldi introduced and investigated the unit graph of $\Z_n$ (the integer modulo $n$) (cf. \cite{C} and \cite{G}). In this article $R$ is a ring (not necessary commutative) with non-zero identity. We denote the Jacobson radical, the set of unit elements of $R$, the set of maximal left ideals of $R$ and the set of $n \times n$ matrices with entries in $R$ by $J(R)$, $U(R)$, $\T{Max}_{l}(R)$ and $M_n(R)$, respectively. A ring $R$ is a {\it local ring}, if $|\T{Max}_{l}(R)|=1$.

Throughout this paper all graphs are simple (with no loop and multiple edges). For a positive integer $r$, a graph is called {\it $r$-partite} if the vertex set admits a partition into $r$ classes such that vertices in the same partition class are not adjacent. An $r$-partite  graph is called {\it complete} if every two vertices in different parts are adjacent.
A {\it clique} of a graph is a complete subgraph. A {\it coclique} (independent set) in a graph is a set of pairwise non-adjacent vertices. A maximum clique is a clique of the largest possible size in a given graph. The clique number $\omega(G)$ of a graph $G$ is the number of vertices in a maximum clique in $G$. The {\it independence number}, $\alpha(G)$, of a graph $G$ is the size of a largest independent set of $G$. A coloring of a graph is a labeling of the vertices with colors such that no two adjacent vertices have the same color. The smallest number of colors needed to color the vertices of a graph $G$ is called its {\it chromatic number}, and denoted by $\chi(G)$.\\

In this paper we show that if $R$ is a ring, then $G(R)$ is a complete $r$-partite graph if and only if $(R, \fm)$ is a local ring and $r=|R/m|=2^n$, for some $n \in \N$ or $R$ is a finite field. Also, we show that if the independence number of $G(R)$ is finite, then either $R$ is finite or a division ring. Finally, we characterize all rings whose unit graphs are bipartite.

\section{Clique Number and Chromatic Number of $G(R)$}
In this section we would like to study some graph theoretical parameters whose finiteness cause the graph $G(R)$ is finite. We start this section with the following lemma.

\begin{lemma}\label{l1}
Let $R$ be a ring. Then the following hold:
\begin{itemize}
\item [(a)] $\omega(G(R/J(R))) \leqslant \omega(G(R))$.
\item [(b)] $\chi(G(R/J(R))) \leqslant \chi(G(R))$.
\item [(c)] If $2 \not \in U(R)$, then $ \chi(G(R/J(R)))=\chi(G(R))$.
\end{itemize}
\end{lemma}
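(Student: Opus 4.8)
The plan is to reduce all three statements to one standard fact about the Jacobson radical: units lift along the quotient map $\pi\colon R\to\overline R:=R/J(R)$. Since $1+J(R)\subseteq U(R)$ and one-sided inverses lift modulo a two-sided ideal contained in $J(R)$, an element $u\in R$ lies in $U(R)$ if and only if $\pi(u)\in U(\overline R)$. Specializing $u=a+b$ gives the equivalence I will use throughout: for $a,b\in R$, one has $a+b\in U(R)$ if and only if $\overline a+\overline b\in U(\overline R)$.

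For parts (a) and (b) I would fix a transversal $T\subseteq R$ for the cosets of $J(R)$, so that $\pi|_T\colon T\to\overline R$ is a bijection, and check that $\pi|_T$ is a graph isomorphism from the induced subgraph $G(R)[T]$ onto $G(\overline R)$: distinct elements of $T$ have distinct images, and by the equivalence above $t+t'\in U(R)$ iff $\overline t+\overline{t'}\in U(\overline R)$ for $t\neq t'$ in $T$, so adjacency is preserved and reflected. Because the clique number and the chromatic number of an induced subgraph never exceed those of the ambient graph, this yields $\omega(G(\overline R))=\omega(G(R)[T])\leqslant\omega(G(R))$ and $\chi(G(\overline R))=\chi(G(R)[T])\leqslant\chi(G(R))$, which are (a) and (b).

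For part (c), assume $2\notin U(R)$; by (b) it suffices to produce a proper coloring of $G(R)$ using $\chi(G(\overline R))$ colors. The key step is the claim that $G(R)$ has no edge between two elements of one and the same $J(R)$-coset. Indeed, if $a\neq b$ with $b-a=:j\in J(R)$ and $a+b\in U(R)$, then $2a=(a+b)-j$ is a unit minus a radical element, hence $2a\in U(R)$; but $2=1+1$ is central, and a central element $x$ for which $xy\in U(R)$ for some $y\in R$ is itself a unit, so $2\in U(R)$, a contradiction. Granting the claim, I take a proper coloring $\overline c$ of $G(\overline R)$ and set $c:=\overline c\circ\pi$ on $R$: every edge $ab$ of $G(R)$ has $a\neq b$ and, by the claim, $\overline a\neq\overline b$, so $\overline a\,\overline b$ is an edge of $G(\overline R)$ and $c(a)=\overline c(\overline a)\neq\overline c(\overline b)=c(b)$. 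Thus $c$ is proper, $\chi(G(R))\leqslant\chi(G(\overline R))$, and with (b) we get equality.

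The only genuinely non-formal ingredient is the small ring-theoretic fact used in (c) — that a central element $x$ with $xy\in U(R)$ for some $y$ is a unit (writing $v=(xy)^{-1}$, centrality of $x$ makes $yv$ a two-sided inverse of $x$), together with the observation that an edge inside a coset forces $2a\in U(R)$; everything else is bookkeeping once the lift-of-units fact is in hand. The same transversal and pull-back arguments also show the two chromatic numbers agree as cardinals in the case both are infinite.
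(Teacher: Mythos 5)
Your proposal is correct and rests on the same key ingredients as the paper's proof: units lift modulo $J(R)$, so representatives of a clique (or a transversal of the cosets) transfer cliques and colorings from $G(R/J(R))$ into $G(R)$ for (a) and (b), and for (c) the pulled-back coloring $c=\overline c\circ\pi$ works because $2\notin U(R)$ forbids edges inside a coset (your central-element argument just makes explicit the paper's step ``$2a\in U(R)$ implies $2\in U(R)$''). The only difference is cosmetic: you package (a) and (b) together via the transversal/induced-subgraph isomorphism, where the paper lifts a clique for (a) and pushes a coloring down by a minimum-over-coset choice for (b).
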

\begin{proof}
\begin{itemize}
\item [(a)] Let $ \{ a_i + J(R) | i \in I \}$ be a clique of $G(R/J(R))$. Then it is easy to check that $ \{ a_i  | i \in I \}$ forms a clique in $G(R)$.
\item [(b)] Suppose that $c:V(G(R)) \lo \{1,2, \ldots, \chi(G(R))\}$ is a coloring of $G(R)$. It is not hard to see that the function $c': V(G(R/J(R))) \lo \{1,2, \ldots, \chi(G(R)) \}$ given by $$c'(a+J(R)):=\T{min} \{ c(x)| x+J(R)=a+J(R) \}$$ is a coloring of $G(R/J(R))$ and so $$\chi(G(R/J(R))) \leqslant \chi(G(R)).$$
\item [(c)] Suppose that $c:V(G(R/J(R))) \lo \{1,2 , \ldots, \chi(G(R/J(R))) \}$ is a coloring of $G(R/J(R))$. Now, define a function $$c':V(G(R)) \lo \{1,2 , \ldots, \chi(G(R/J(R))) \}$$ given by $c'(a)=c(a+J(R))$. We claim that $c'$ is a coloring of $G(R)$. To see this let $a,b \in R$ be two adjacent vertices in $G(R)$ and $c'(a)=c'(b)$. Thus $a+b \in U(R)$. If $a+J(R)=b+J(R)$, then $a-b \in J(R)$. This implies that $2a \in U(R)$, and so $2 \in U(R)$, a contradiction. Hence assume that $a+J(R) \neq b+J(R)$. Since $a+b$ is unit and $c(a+J(R))=c(b+J(R))$ we obtain a contradiction. Therefore $c'$ is a coloring of $G(R)$ and the proof is complete.
\end{itemize}
\end{proof}
Before proving the next result we need the following lemma.
\begin{lemma}\label{l2}
Let $R$ be a left Artinian ring and $R / J(R)$ be finite. Then $R$ is a finite ring.
\end{lemma}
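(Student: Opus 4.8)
The plan is to reduce the problem to a statement about finitely generated modules over a left Artinian ring and then exploit the nilpotence of the Jacobson radical. First I would recall two standard facts about a left Artinian ring $R$: the quotient $\bar R := R/J(R)$ is semisimple Artinian, and $J(R)$ is nilpotent, say $J(R)^{k} = 0$ for some $k \in \N$. The hypothesis says $\bar R$ is finite. The idea is to filter $R$ by the powers of its radical,
$$
R \supseteq J(R) \supseteq J(R)^2 \supseteq \cdots \supseteq J(R)^{k-1} \supseteq J(R)^k = 0,
$$
and to show each successive quotient $J(R)^i / J(R)^{i+1}$ is finite; since $R$ has a finite filtration with finite factors, $|R| = \prod_{i=0}^{k-1} |J(R)^i / J(R)^{i+1}|$ is finite.

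The key step is the finiteness of each factor $J(R)^i / J(R)^{i+1}$. Each such factor is naturally a left $\bar R$-module (the action of $J(R)$ on it is trivial, since $J(R)\cdot J(R)^i \subseteq J(R)^{i+1}$), so it is a module over the semisimple finite ring $\bar R$. Now I would invoke that $R$, being left Artinian, is a Noetherian left $R$-module, hence each $J(R)^i$ is a finitely generated left $R$-module, and therefore $J(R)^i / J(R)^{i+1}$ is a finitely generated left $\bar R$-module. A finitely generated module over a finite ring is finite: it is a quotient of $\bar R^{\,m}$ for some $m$, and $|\bar R^{\,m}| = |\bar R|^m < \infty$. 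This gives $|J(R)^i / J(R)^{i+1}| < \infty$ for every $i$, and combining the factors yields $|R| < \infty$.

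The main obstacle — really the only subtle point — is making sure the ingredients "left Artinian $\Rightarrow$ left Noetherian" (Hopkins–Levitzki) and "$J(R)$ nilpotent for left Artinian $R$" are available; both are classical and can be cited. A secondary point to handle cleanly is that $J(R)^i$ is finitely generated as a left module: this follows because a left Artinian ring is left Noetherian, so every submodule of the Noetherian module ${}_R R$ — in particular every left ideal $J(R)^i$ — is finitely generated. With these in hand the argument above is complete, and no delicate computation is needed.
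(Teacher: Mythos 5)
Your proposal is correct and follows essentially the same route as the paper: filter $R$ by the powers of the nilpotent ideal $J(R)$, use Hopkins--Levitzki to get left Noetherianity so that each factor $J(R)^i/J(R)^{i+1}$ is a finitely generated module over the finite ring $R/J(R)$, hence finite, and conclude $|R|<\infty$ from the finite filtration. No substantive differences from the paper's argument.
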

\begin{proof}
Since $R$ is a left Artinian ring, there exists $n \in \mathbb{N}$ such that $(J(R))^n=0$ (cf. \cite[Theorem 4.12]{La}). Now, since $R$ is a left Noetherian ring  $(J(R))^i / (J(R))^{i+1}$ is a finitely generated $R/ J(R) $-module and finiteness of $R/J(R)$ concludes the finiteness of $ (J(R))^i / (J(R))^{i+1}$. Using induction on $i$, one can see that $J(R)$ is finite and so $R$ is a finite ring.
\end{proof}
The following theorem shows that in a left Artinian ring $R$, if the maximum clique of $G(R)$ is finite and $2 \in U(R)$,  then $R$ is finite.
\begin{theorem}\label{t1}
Let $R$ be a left Artinian ring, $2 \in U(R)$ and $ \omega(G(R)) < \infty$. Then $R$ is a finite ring.
\end{theorem}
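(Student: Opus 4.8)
The plan is to reduce the statement to the finiteness of $R/J(R)$ and then quote Lemma \ref{l2}. First I would note that $R/J(R)$ is again left Artinian and that, by Lemma \ref{l1}(a), $\omega(G(R/J(R))) \leqslant \omega(G(R)) < \infty$. Since $R$ is left Artinian, $R/J(R)$ is semisimple, so by the Artin--Wedderburn theorem (see, e.g., \cite{La}) we may write $R/J(R) \cong \prod_{i=1}^{k} M_{n_i}(D_i)$ for division rings $D_1,\dots,D_k$ and positive integers $n_1,\dots,n_k$. By Lemma \ref{l2} it suffices to show $R/J(R)$ is finite, and for that it is enough to prove that each $D_i$ is finite.

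To exploit the hypothesis, observe that the image of a unit under $R \to R/J(R)$ is a unit, so $2 \in U(R/J(R))$; projecting to the $i$-th factor shows $2\,I_{n_i} \in U(M_{n_i}(D_i))$, hence $2 \neq 0$ in $D_i$, i.e. $\mathrm{char}(D_i) \neq 2$ for every $i$. Now suppose, for contradiction, that some $D_j$ is infinite. On $D_j \setminus \{0\}$ the map $x \mapsto -x$ is a fixed-point-free involution (if $x = -x$ then $2x = 0$, forcing $x = 0$, as $2$ is invertible in $D_j$), so it partitions $D_j \setminus \{0\}$ into two-element blocks $\{x,-x\}$; since $D_j$ is infinite there are infinitely many such blocks. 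Choosing one representative from each block gives an infinite set $T \subseteq D_j$ such that $x + y \neq 0$ (hence $x+y \in U(D_j)$) for all distinct $x,y \in T$. Then I would lift $T$ to an infinite clique of $G(R/J(R))$: for $t \in T$ let $a_t$ be the element of $\prod_{i=1}^{k} M_{n_i}(D_i)$ whose $j$-th coordinate is $t\,I_{n_j}$ and whose other coordinates are $I_{n_i}$. The $a_t$ are pairwise distinct, and for distinct $t,t' \in T$ the sum $a_t + a_{t'}$ has $j$-th coordinate $(t+t')I_{n_j}$ (a unit) and $i$-th coordinate $2\,I_{n_i}$ (a unit, since $\mathrm{char}(D_i)\neq 2$), so $a_t + a_{t'} \in U(R/J(R))$. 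Thus $\{a_t : t \in T\}$ is an infinite clique of $G(R/J(R))$, contradicting $\omega(G(R/J(R))) < \infty$.

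Hence every $D_i$ is finite, so $R/J(R) \cong \prod_{i=1}^{k} M_{n_i}(D_i)$ is finite, and Lemma \ref{l2} yields that $R$ is finite. The only slightly delicate point is the bookkeeping when passing through the matrix and product factors (making sure the lifted vertices remain adjacent, which is where $\mathrm{char}(D_i) \neq 2$, i.e. $2 \in U(R)$, is used); the conceptual core is just that an infinite division ring of characteristic $\neq 2$ manifestly contains an infinite clique in its unit graph, and that mere finiteness—not any bound on the order—of each $D_i$ is all that is needed.
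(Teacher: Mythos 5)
Your proposal is correct and follows essentially the same route as the paper: reduce to $R/J(R)$ via Lemma \ref{l1}(a) and Lemma \ref{l2}, apply Artin--Wedderburn, and rule out an infinite $D_i$ by exhibiting an infinite clique of scalar matrices (using $2\in U(R)$ to handle the other factors). You merely spell out the transversal argument and the lifting that the paper leaves implicit in the phrase ``one can construct an infinite clique using infinite number of scalar matrices.''
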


\begin{proof}
First suppose that $J(R)=0$. By Artin-Wedderburn Theorem there are  natural numbers $n_i$ and division rings $D_i$, for $i=1, \ldots , k$ such that $$R \cong M_{n_1}(D_1) \times \cdots \times M_{n_k}(D_k).$$ Since $\omega (G(R)) < \infty$ and $2 \in U(R)$, we find that $\omega(G(M_{n_i}(D_i)))< \infty$ for $i=1, \ldots , k$. We claim that every $D_i$ is finite. To get a contradiction assume that, $D_i$ is infinite. One can construct an infinite clique using infinite number of scalar matrices. Thus $|R| < \infty$. In the case $J(R) \neq 0$, the assertion follows from Lemma \ref{l1}, Part(a) and Lemma \ref{l2}.
\end{proof}
\begin{remark}
The Artinian property is a necessary condition in Theorem \ref{t1}. To see this we note that $\mathbb{Z}_3[x]$ is a non-Artinian ring with $\omega(\mathbb{Z}_3[x])=2$.
\end{remark}
\begin{theorem}\label{p1}
Let $R$ be a ring such that $\omega(G(R))< \infty$, $|\T{Max}_l(R)|< \infty$ and $2 \in U(R)$. Then $R$ is a finite ring.
\end{theorem}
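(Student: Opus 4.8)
The plan is to treat $R/J(R)$ and $J(R)$ separately, reducing the quotient to Theorem \ref{t1} and bounding the radical by exhibiting a clique inside a unit coset. The crucial preliminary observation is that the hypothesis $|\T{Max}_l(R)|<\infty$ forces $R/J(R)$ to be left Artinian. Indeed, writing $\T{Max}_l(R)=\{\fm_1,\dots,\fm_k\}$, we have $J(R)=\fm_1\cap\cdots\cap\fm_k$, so the $R$-module map $R\lo\bigoplus_{i=1}^k R/\fm_i$, $r\mapsto(r+\fm_i)_{i=1}^k$, has kernel exactly $J(R)$; hence ${}_R(R/J(R))$ is isomorphic to a submodule of a finite direct sum of simple left $R$-modules, so it is semisimple of finite length. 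Since the $R$-submodules of $R/J(R)$ are precisely the left ideals of the ring $R/J(R)$, it follows that $R/J(R)$ is a semisimple (in particular left Artinian) ring.

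Now I would apply the earlier machinery to the quotient: by Lemma \ref{l1}(a), $\omega(G(R/J(R)))\leqslant\omega(G(R))<\infty$, and since $2\in U(R)$ we get $2\in U(R/J(R))$. As $R/J(R)$ is left Artinian, Theorem \ref{t1} applies verbatim and yields $|R/J(R)|<\infty$. (If one prefers to avoid citing Theorem \ref{t1} wholesale, the same conclusion follows by re-running its proof: Artin--Wedderburn writes $R/J(R)\cong M_{n_1}(D_1)\times\cdots\times M_{n_t}(D_t)$, and each $D_i$ must be finite, else infinitely many scalar matrices would form an infinite clique, using $2\in U$.)

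Finally, to bound $J(R)$, consider $S=1+J(R)=\{1+j : j\in J(R)\}$, a set of size $|J(R)|$. Each element of $S$ is a unit of $R$, and for distinct $x=1+j_1$, $y=1+j_2$ in $S$ the sum $x+y=2+(j_1+j_2)$ reduces modulo $J(R)$ to $2+J(R)$, which is a unit of $R/J(R)$; since units of $R$ are exactly the elements that are units modulo $J(R)$, we get $x+y\in U(R)$. Thus $S$ is a clique in $G(R)$, so $|J(R)|=|S|\leqslant\omega(G(R))<\infty$. Combining the two bounds, $|R|=|J(R)|\cdot|R/J(R)|<\infty$. I do not anticipate a genuine obstacle: the only point requiring a little care is the reduction of the quotient to the left Artinian case, i.e. the module-theoretic argument that finitely many maximal left ideals makes $R/J(R)$ Artinian so that Theorem \ref{t1} becomes applicable.
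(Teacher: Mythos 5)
Your proposal is correct, and while it follows the paper's overall skeleton (bound $R/J(R)$ via Lemma \ref{l1}(a), then bound $J(R)$ by exhibiting $1+J(R)$ as a clique, the latter step being word-for-word the paper's), it handles the quotient by a genuinely different and in fact more careful route. The paper simply asserts $R/J(R)\cong R/\fm_1\times\cdots\times R/\fm_n$ and deduces that each factor is finite; since the $\fm_i$ are only maximal \emph{left} ideals, this Chinese-remainder-style claim is problematic in the noncommutative setting: the natural map $R/J(R)\to\bigoplus_i R/\fm_i$ is an injective module map but need not be surjective, and the quotients $R/\fm_i$ need not even be rings (for instance $M_2(\F_2)$ has three maximal left ideals, each quotient of order $4$, so the asserted product would have order $64$ rather than $16$). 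Your argument --- embed ${}_R(R/J(R))$ into the finite direct sum $\bigoplus_i R/\fm_i$ of simple modules, conclude that $R/J(R)$ is a semisimple (hence left Artinian) ring, and then invoke Theorem \ref{t1} together with $2\in U(R/J(R))$ to get $|R/J(R)|<\infty$ --- sidesteps this issue entirely; the only cost is the reliance on Theorem \ref{t1} (i.e.\ on Artin--Wedderburn), which is already available at this point in the paper. The final steps, that $1+J(R)$ is a clique because a unit plus a Jacobson-radical element is a unit (so $|J(R)|\leqslant\omega(G(R))$), and that $|R|=|J(R)|\cdot|R/J(R)|$, coincide with the paper's conclusion, so your write-up is a complete and, at the quotient step, a more robust proof.
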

\begin{proof}
Let $\T{Max}_l(R)= \{\fm_1, \ldots,\fm_n\}$. Then $$R/J(R) \cong R/\fm_1 \times \cdots \times R/\fm_n .$$ By Lemma \ref{l1}, Part(a), $\omega(G(R/J(R)))< \infty$. Hence $\omega(G(R/\fm_i))< \infty$, for $i=1,\ldots, n$. Thus $R/\fm_i$ is finite for $i=1,\ldots, n$ and so $R/J(R)$ is finite. On the other hand, $1+J(R)$ is a clique in $G(R)$ and so $|J(R)|$ is finite. This completes the proof.
\end{proof}
The following remark shows that the finiteness of $\T{Max}_{l}(R)$ and $2 \in U(R)$ are  not superfluous in Theorem \ref{p1}.
\begin{remark}\label{e1}
Let $R_1=\Z_3[x]$ and $R_2=\Z_2[x_1,x_2,\ldots]/({x_1}^2,{x_2}^2,\ldots)$. Then $\omega(G(R_1))=2,\ |\T{Max}_l(R_1)|= \infty$ and $2 \in U(R_1)$. Also, $\omega(G(R_2))=2,\ |\T{Max}_l(R_2)|= 1$ and $2 \not \in U(R_2)$.
\end{remark}
\begin{remark}
Let $R$ be a ring such that $\omega(G(R))< \infty$ and $2 \in U(R)$. In view of the proof of Theorem \ref{p1}, $|J(R)| < \infty$. If $x \in J(R)$, then finiteness of $J(R)$ implies that $x ^i = x^j$ for some $i,j \in \mathbb{N}$ with $i<j$. Thus $x^i=0$ and so $J(R)$ is nilpotent.
\end{remark}
Now, we provide a lower bound for the clique number of unit graph of a ring in terms of the number of maximal ideals.
\begin{theorem}\label{t4}
Let $R$ be a ring such that $|\T{Max}_l(R)|< \infty$ and $2 \in U(R)$. Then $\omega(G(R)) \geqslant |\T{Max}_l(R)|+1 $.
\end{theorem}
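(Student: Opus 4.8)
The plan is to reduce to the semisimple case and then build the required clique one Wedderburn factor at a time. Since every maximal left ideal of $R$ contains $J(R)$, we have $|\T{Max}_l(R/J(R))| = |\T{Max}_l(R)| =: n$, and the canonical surjection $R\to R/J(R)$ sends $2$ to a unit; so by Lemma~\ref{l1}(a) it is enough to exhibit a clique of size $n+1$ in $G(R/J(R))$, and I may as well replace $R$ by $R/J(R)$. Now $J(R)=0$ equals the intersection of the $n$ maximal left ideals $\fm_1,\dots,\fm_n$, so $r\mapsto(r+\fm_i)_i$ embeds $R$, as a left $R$-module, into $R/\fm_1\times\cdots\times R/\fm_n$, a finite direct sum of simple modules. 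Hence $R$ is semisimple as a left module over itself, i.e.\ a semisimple ring, and by Artin--Wedderburn $R\cong M_{n_1}(D_1)\times\cdots\times M_{n_k}(D_k)$ for division rings $D_i$. Writing $r_i:=|\T{Max}_l(M_{n_i}(D_i))|$, the maximal left ideals of a finite product are exactly the \emph{coordinate} ones, so $\sum_{i=1}^k r_i=n$; and $2$ is a unit in each factor, forcing $\T{char}\,D_i\ne 2$.

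Next I would treat a single factor $A=M_m(D)$ with $2\in U(A)$, producing a clique of $G(A)$ through $1_A$ of size at least $r+1$, where $r:=|\T{Max}_l(A)|$. If $D$ is infinite then $m=1$ (otherwise $r$ would be infinite), $r=1$, and $\{0,1_A\}$ does it. If $D$ is finite then, by Wedderburn, $D=\F_q$ with $q$ odd; the maximal left ideals of $M_m(\F_q)$ biject with the lines of $\F_q^m$, so $r=(q^m-1)/(q-1)$. I would then embed the field $\F_{q^m}$ into $M_m(\F_q)$ via its regular representation over $\F_q$ and, inside $\F_{q^m}$, choose $0$ together with one element from each pair $\{z,-z\}$, $z\in\F_{q^m}\setminus\{0\}$, making sure $1$ is among the chosen representatives. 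Because $\T{char}\,\F_{q^m}\ne 2$ this is a clique of $G(\F_{q^m})$, hence of $G(M_m(\F_q))$, of size $1+(q^m-1)/2$, and $(q^m-1)/2\ge(q^m-1)/(q-1)=r$ since $q\ge 3$. So in all cases $A_i$ has a clique $C_i\ni 1_{A_i}$ with $|C_i|\ge r_i+1$.

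Finally I would splice these local cliques inside the product. From $R=A_1\times\cdots\times A_k$ take the identity $\mathbf 1$, and for each $i$ and each $c\in C_i\setminus\{1_{A_i}\}$ the element with $c$ in coordinate $i$ and $1_{A_j}$ in every other coordinate. Any two of these agree with $\mathbf 1$ outside one coordinate, so every pairwise sum has, in each coordinate, either a sum of two distinct elements of some $C_i$ (a unit, as $C_i$ is a clique) or $1+1=2$ (a unit, as $2\in U(A_j)$); thus the set is a clique, its elements are pairwise distinct, and its size is $1+\sum_i(|C_i|-1)\ge 1+\sum_i r_i=n+1$. By Lemma~\ref{l1}(a), $\omega(G(R))\ge|\T{Max}_l(R)|+1$.

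The main obstacle is the middle step: bounding $\omega(G(M_m(D)))$ from below by the number of maximal left ideals. Scalar matrices alone give a clique of size only about $|D|/2$, hopelessly small once $m\ge 2$; the point that rescues the argument is the maximal subfield $\F_{q^m}\subseteq M_m(\F_q)$, which converts the matrix problem into the trivial finite-field problem and --- precisely because $q\ge 3$ --- leaves just enough room. One should also record the two standard facts used along the way: the maximal left ideals of a finite direct product are the coordinate ones, and $|\T{Max}_l(M_m(D))|=|\mathbb P^{m-1}(D)|$.
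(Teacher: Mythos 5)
Your proposal is correct, but it proves the bound by a genuinely different (and in fact more robust) route than the paper. The paper's proof is two lines: it invokes the decomposition $R/J(R)\cong R/\fm_1\times\cdots\times R/\fm_n$ and exhibits the clique consisting of the all-ones vector together with the $n$ vectors having a single zero coordinate, using $2\in U(R)$ to make the sums $1+1$ units. You instead pass to $R/J(R)$, prove it is semisimple, apply Artin--Wedderburn, count the maximal left ideals of each factor $M_m(\F_q)$ as $(q^m-1)/(q-1)$ via lines in $\F_q^m$, build a clique of size $1+(q^m-1)/2$ inside the maximal subfield $\F_{q^m}\subseteq M_m(\F_q)$ by taking $0$ together with one element of each pair $\{z,-z\}$, and then splice these per-factor cliques across the product through the identity element. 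The extra work buys something real: for a noncommutative ring the $\fm_i$ are only left ideals, so the Chinese-Remainder-type isomorphism the paper quotes is not available in general (for instance $M_2(\F_3)$ has four maximal left ideals but is certainly not a product of four factors, even as a left module), and even after replacing it by the Wedderburn decomposition the paper's $0$--$1$ clique only has size (number of simple factors)$+1$, which falls short of $|\T{Max}_l(R)|+1$ as soon as some matrix size exceeds $1$. Your subfield construction, together with the inequality $(q^m-1)/2\geqslant(q^m-1)/(q-1)$ valid precisely because $q\geqslant 3$, is exactly what closes this gap, so your argument actually establishes the theorem in the full noncommutative generality in which it is stated, at the cost of being longer and relying on Wedderburn's little theorem and the classification of maximal left ideals of matrix rings.
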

\begin{proof}
Let $\T{Max}_l(R)= \{\fm_1, \ldots,\fm_n\}$. Then $$R/J(R) \cong R/\fm_1 \times \cdots \times R/\fm_n .$$ Since $ 2 \in U(R)$,
it is easy to check that the set $$\{(0,1,\ldots,1),(1,0,1,\ldots,1),\ldots,(1,\ldots,1,0),(1,\ldots,1) \}$$ forms a clique in $G(R/\fm_1 \times \cdots \times R/\fm_n)$. Hence $\omega(G(R/J(R)) \geqslant n+1$ and by Lemma \ref{l1}, Part(a), $\omega(G(R))\geqslant n+1$.
\end{proof}
Note that the ring $R_1$ given in Remark \ref{e1} shows that the finiteness of $\T{Max}_l(R)$ in Theorem \ref{t4} is not superfluous.
\\

The next result shows that if the independence number of $G(R)$ is finite, then $R$ is finite or $R$ is a division ring.

\begin{theorem}\label{t2}
Let $R$ be a ring and $ \alpha(G(R)) < \infty$. Then $|R| < \infty$ or $R$ is a division ring.
\end{theorem}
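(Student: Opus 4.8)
The plan is to reduce to the case $J(R)=0$ using Lemma \ref{l1}-style ideas and the structure theory, then to exploit that a coclique in $G(R)$ is precisely a set of elements whose pairwise sums are all non-units (i.e. lie in $R\setminus U(R)$), so finiteness of $\alpha(G(R))$ forces the set of non-units to be ``small'' in a suitable sense. First I would observe that if $J(R)\neq 0$, then for any $x\in J(R)$ the coset structure gives control: the set $\{a : a\equiv 0 \pmod{J(R)}\}$ together with translates — more precisely, I would show that an infinite $J(R)$ produces an infinite coclique. Indeed, if $x,y\in J(R)$ then $x+y\in J(R)\subseteq R\setminus U(R)$, so $J(R)$ itself is a coclique in $G(R)$; hence $\alpha(G(R))<\infty$ already forces $|J(R)|<\infty$. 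So the real content is the semisimple quotient.

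Next I would pass to $\bar R=R/J(R)$. A coclique in $G(R)$ maps to... well, one must be careful: non-adjacency in $G(R)$ does not obviously descend. Instead I would go the other way and lift: if $S$ is a coclique in $G(\bar R)$, pick representatives; two distinct representatives $a,b$ have $a+b\notin U(R)$ iff $\overline{a+b}\notin U(\bar R)$ (since units lift/descend modulo the radical), so a coclique of $\bar R$ of size $k$ lifts to one of size $k$ in $G(R)$ provided the chosen representatives remain distinct, which they do. Hence $\alpha(G(\bar R))\le \alpha(G(R))<\infty$, and it suffices to treat $R$ semisimple. By Artin–Wedderburn (once we know $R$ is, say, the relevant quotient — though note $R$ here is only assumed to be an arbitrary ring, so I should be careful not to invoke Artinian-ness) write, in the semisimple case, $R\cong M_{n_1}(D_1)\times\cdots\times M_{n_k}(D_k)$.

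Now the heart: in a semisimple $R$, I claim either every $D_i$ is finite with every $n_i=1$ and $k=1$ (giving $R$ a finite field, in particular $R$ finite as a division ring) — or more generally that $\alpha<\infty$ forces $|R|<\infty$ unless $R$ is a division ring. The mechanism: if some $n_i\ge 2$, the ring $M_{n_i}(D_i)$ contains an infinite (or large) coclique built from, say, the elementary matrices $E_{12}$ scaled or strictly-upper-triangular matrices, whose pairwise sums are nilpotent hence non-units; this already kills $\alpha<\infty$ if $D_i$ is infinite, and for finite $D_i$ it contributes nothing harmful. If $k\ge 2$ and some factor, say the $j$-th, is infinite, then $\{(0,\dots,0,d,0,\dots,0): d\in D_j\}$ (with $d$ in a slot where adding two such still has a zero coordinate in another slot — this needs $k\ge 2$) is an infinite coclique. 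So: if $R$ is semisimple with $\alpha(G(R))<\infty$, then every matrix factor with $n_i\ge2$ forces the corresponding $D_i$ finite, and if $k\ge2$ every $D_i$ is finite; the only way to have an infinite $D_i$ is $k=1$, $n_1=1$, i.e. $R$ a division ring. Combining with $|J(R)|<\infty$ (and Lemma \ref{l2}-type finiteness when the Artinian hypothesis is available — but here we must instead argue directly that $\bar R$ finite plus $J(R)$ finite makes $R$ finite, which holds since $R$ is then a finite set: $|R|=|J(R)|\cdot|\bar R|$ as sets), we get $|R|<\infty$ unless $R$ is a division ring. The main obstacle I anticipate is the bookkeeping for a general (not a priori Artinian) ring: ensuring that the coclique arguments in $M_n(D)$ and in product rings genuinely produce cocliques of unbounded size, and handling the subtle point that $\alpha(G(R))<\infty$ does not immediately give a structure theorem for $R$ itself — one must route everything through carefully-chosen explicit coclique constructions rather than through Artin–Wedderburn applied to $R$ directly.
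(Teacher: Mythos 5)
There is a genuine gap: your reduction hinges on applying Artin--Wedderburn to $R/J(R)$, but the theorem is stated for an arbitrary ring, and $R/J(R)$ is merely semiprimitive (Jacobson radical zero), not semisimple, in the absence of a chain condition. You flag this worry yourself (``I should be careful not to invoke Artinian-ness'') but never resolve it: your explicit coclique constructions live inside a product of matrix rings over division rings, and you give no argument covering a general semiprimitive quotient, so the proof does not go through. A second, independent gap is in the final assembly: even granting the structure theory, your case analysis concludes ``$\bar R$ is a division ring'' when $k=1$, $n_1=1$ and $D_1$ is infinite, but the theorem requires \emph{$R$} to be finite or a division ring. If $J(R)\neq 0$ (finite, as you correctly note) while $R/J(R)$ is an infinite division ring, then $R$ is infinite and not a division ring; nothing in your sketch rules this configuration out, so the stated conclusion is not reached.

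The paper's proof avoids all of this with one observation that does the work of your entire reduction: every proper left ideal $I$ is an independent set in $G(R)$ (the sum of two elements of $I$ lies in $I$, hence is a non-unit), so $\alpha(G(R))<\infty$ forces every proper left ideal to be finite. Then for $0\neq x\in R$, if $Rx\neq R$ both $Rx$ and $\T{Ann}_l(x)$ are finite, and $R/\T{Ann}_l(x)\cong Rx$ as abelian groups gives $|R|<\infty$. Hence if $R$ is infinite, every nonzero element is left invertible, and symmetrically right invertible, so $R$ is a division ring. This elementary annihilator argument needs no passage to $R/J(R)$, no structure theorem, and it automatically disposes of the problematic ``$J(R)\neq 0$ with $\bar R$ an infinite division ring'' case that your outline leaves open. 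If you want to salvage your approach, you would have to replace Artin--Wedderburn by direct coclique constructions valid in any semiprimitive ring and add an argument (essentially the annihilator computation above) handling a nonzero finite radical over an infinite quotient.
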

\begin{proof}
Let $I$ be a proper left ideal of $R$. Clearly, $I$ is an independent set and so $|I| < \infty $. Let $0 \neq x \in R$. If $Rx \neq R$, then $|Rx| < \infty$. On the other hand, we have $|\T{Ann}_l(x)| <\infty$. Since $R / \T{Ann}_l(x) \cong Rx$ as an abelian group, thus $|R|< \infty$. Hence if $R$ is infinite, then every $0 \neq x $ is left invertible. Similarly, every $0 \neq x$ is right invertible. This implies that $R$ is a division ring and the proof is complete.
\end{proof}
\begin{corollary}
Let $R$ be a ring. If $3 \leqslant \alpha(G(R)) < \infty$, then $|R| < \infty$.
\end{corollary}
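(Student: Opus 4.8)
The plan is to reduce the statement immediately to Theorem~\ref{t2}, which already establishes the dichotomy that either $|R| < \infty$ or $R$ is a division ring. Thus the only thing left to do is to rule out the second alternative under the hypothesis $\alpha(G(R)) \geqslant 3$; equivalently, I will show that every division ring $R$ satisfies $\alpha(G(R)) \leqslant 2$.

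To prove this, I would observe that in a division ring $U(R) = R \setminus \{0\}$, so for distinct vertices $a,b$ of $G(R)$ the sum $a+b$ is a unit unless $a+b = 0$, i.e. unless $b = -a$. Hence in any independent set $S$ of $G(R)$ every two distinct elements must be negatives of one another. If $S$ contained three distinct elements $a,b,c$, then $b = -a$ and $c = -a$ would force $b = c$, a contradiction; therefore $|S| \leqslant 2$, and so $\alpha(G(R)) \leqslant 2$ (in fact $\alpha(G(R)) = 1$ when $\mathrm{char}(R) = 2$, since then $G(R)$ is complete, and $\alpha(G(R)) = 2$ otherwise, witnessed by $\{1,-1\}$).

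Putting the two pieces together: if $3 \leqslant \alpha(G(R)) < \infty$, then $R$ cannot be a division ring, so Theorem~\ref{t2} forces $|R| < \infty$. I do not expect any genuine obstacle here — the corollary is essentially a one-line complement to Theorem~\ref{t2} — and the only point requiring a moment's care is the elementary observation that in a division ring the non-adjacency relation in $G(R)$ is exactly the relation $b = -a$, which relies on the fact that every nonzero element is a unit.
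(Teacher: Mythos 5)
Your proof is correct and is exactly the intended (implicit) argument: the paper states this corollary without proof as an immediate consequence of Theorem~\ref{t2}, and your observation that any division ring has $\alpha(G(R))\leqslant 2$ (non-adjacency forces $b=-a$) is precisely what makes the reduction work.
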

\section{Rings whose unit graphs are complete $r$-partite}
In \cite[Theorem 3.5]{A+M+P+Y}, the authors showed that if $R$ is a commutative ring and $\fm$ is a maximal ideal of $R$ such that $|R/\fm|=2$, then $G(R)$ is a complete bipartite graph if and only if $(R, \fm)$ is a local ring. In the following theorem we generalize their result and characterize all rings (not necessary commutative) whose unit graphs are complete multipartite.
\begin{theorem}\label{t3}
Let $R$ be a ring. Then $G(R)$ is a complete $r$-partite graph if and only if $(R,\fm)$ is a local ring and $r=|R/\fm|=2^n$, for some $n \in \mathbb{N}$ or $R$ is a finite field.
\end{theorem}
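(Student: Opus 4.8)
The plan is to prove the two directions separately, and to treat the "easy" direction first since it is essentially a verification. Suppose $(R,\fm)$ is local with $r=|R/\fm|=2^n$. Since $R/\fm$ is a division ring of characteristic $2$ (as $|R/\fm|=2^n$ forces $2\in\fm$), it is in fact a finite field $\F_{2^n}$, and in particular $2\in\fm$, i.e.\ $2\notin U(R)$. For $a,b\in R$ we have $a+b\in U(R)$ iff $a+b\notin\fm$ iff $a+\fm\neq b+\fm$ (using char $2$, so that $x\neq -x$ fails and the coset of $a+b$ depends only on the cosets of $a$ and $b$: $a+b\equiv b+a$, and $a+b\in\fm$ iff $\bar a=\bar b$ since $-1\equiv 1$). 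Hence the partition of $R$ into the $2^n$ cosets of $\fm$ is exactly the partition into non-adjacency classes, and two vertices in different cosets are always adjacent, so $G(R)$ is complete $2^n$-partite. If instead $R$ is a finite field $\F_q$, then every nonzero element is a unit, and $a+b\in U(R)$ iff $a\neq -b$; when $q=2$ this again gives complete bipartite, and when $q>2$ the graph $G(\F_q)$ is the complement of a perfect matching on $q$ vertices together with an isolated-in-matching vertex $0$ (the pair $\{x,-x\}$ are the only non-adjacent pairs, and $0$ is adjacent to everything), which is a complete $r$-partite graph with $r=q-1$ parts, one part $\{0\}$ of size $1$ and the rest of size $2$. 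So in all listed cases $G(R)$ is complete multipartite.

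For the converse, assume $G(R)$ is complete $r$-partite. The first key step is to show $R$ is finite. The parts of a complete multipartite graph are exactly the maximal cocliques, and any two distinct vertices in the same part have the same neighbourhood; in particular $0$ lies in some part $V_0$, and for any $x\in V_0$, $x$ and $0$ have the same neighbours, so $y\in U(R)\iff x+y\in U(R)$ for all $y\neq x$. Using $y=x$ itself (via a third element) and $y\in U(R)$, one forces $x+U(R)\subseteq U(R)\cup\{x\}$, and from such translation arguments together with the fact that $R\setminus V_0$ is covered by the remaining parts, I expect to deduce that $V_0$ is a (two-sided) ideal and that $R/V_0$ has the property that every nonzero element is a unit — here is where I would invoke that $\alpha(G(R))<\infty$ (each part is a finite coclique since $G$ has only $r$ parts... actually $r<\infty$ only bounds the number of parts, not their size) — so instead I would argue: if some part is infinite it is a proper left ideal or at least a translate of one, contradicting Theorem~\ref{t2} unless $R$ is a division ring; and a division ring $D$ has $G(D)$ complete $r$-partite only when $D$ is a finite field (if $D$ is infinite, $0$ is adjacent to all nonzero elements but the classes $\{x,-x\}$ have size $2$ when char $\neq 2$ giving infinitely many parts, contradiction; if char $D=2$ then $G(D)$ is complete on $|D|$ vertices minus a matching-free... it's $K_{|D|}$ minus nothing, i.e.\ $a+b=0$ never for $a\neq b$, so $G(D)=K_{|D|}$, which is complete $|D|$-partite only if $|D|<\infty$). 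Hence $R$ is finite.

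With $R$ finite, the second key step identifies the structure. A finite ring is a finite product of local rings $R\cong R_1\times\cdots\times R_k$ iff $R$ is commutative Artinian; in general a finite ring decomposes as a product of rings with local... — more carefully, $R/J(R)\cong M_{n_1}(\F_{q_1})\times\cdots\times M_{n_k}(\F_{q_k})$ by Artin–Wedderburn. I claim complete multipartiteness of $G(R)$ forces $k=1$, $n_1=1$: the graph $G(A\times B)$ of a nontrivial product contains an induced $P_4$ or an induced path of the form $a - b - c$ with $a,c$ non-adjacent but $b$ adjacent to both AND another vertex adjacent to $a,c$ but not $b$ — concretely one checks $G(A\times B)$ is complete multipartite iff one factor is trivial (this is the heart of the argument and the main obstacle: ruling out products and matrix rings $M_n$ with $n\geq 2$, for which one exhibits four elements $\{0,E_{11},E_{12},E_{11}+E_{12}\}$-type configurations violating transitivity of the non-adjacency relation). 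So $R/J(R)$ is a finite field $\F_q$, i.e.\ $R$ is local with residue field $\F_q$ and maximal ideal $\fm=J(R)$. Finally, if $2\notin\fm$ then $2\in U(R)$ and $1,-1$ are distinct with $1+(-1)=0\notin U(R)$ while both adjacent to any unit $u$ with... — one shows $2\in U(R)$ makes $G(R)$ fail to be multipartite unless $R=\F_q$ is a field (in a field of odd order we already saw the structure, with a singleton part $\{0\}$; if $\fm\neq 0$ and $2\in U(R)$, take $0,\ x\in\fm\setminus\{0\},\ 1,\ 1-x$: then $0\sim 1$, $0\sim 1-x$, $x\sim 1$, but $x\not\sim 1-x$ since $1\notin U(R)$, and $0\not\sim x$, which violates the transitivity of "same part"). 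Therefore $2\in\fm$, forcing the residue field to have characteristic $2$, hence $|R/\fm|=q=2^n$, and the coset partition shows $r=2^n$. The main obstacle, as noted, is the combinatorial lemma that $G$ of a product or of $M_n(D)$ with $n\geq2$ is never complete multipartite; everything else is bookkeeping with cosets and small explicit configurations.
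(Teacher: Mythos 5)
Your sufficiency direction (local with $|R/\fm|=2^n$, or a finite field, implies complete multipartite) is essentially correct and matches the paper, apart from a harmless miscount of the number of parts for $\F_q$ with $q$ odd (it is $1+(q-1)/2$, not $q-1$). The necessity direction, however, is built on a false first step. You claim that completeness of the $r$-partition forces $R$ to be finite, and then run everything through Artin--Wedderburn for finite rings; but infinite local rings such as $\F_2[[t]]$ or $\Z$ localized at $2$ have complete bipartite unit graphs (the two cosets of $\fm$ are the parts), so finiteness simply cannot be derived, and the theorem itself asserts only $|R/\fm|<\infty$, not $|R|<\infty$. The way you try to get finiteness is also logically invalid: Theorem~\ref{t2} has the hypothesis $\alpha(G(R))<\infty$, so exhibiting one infinite part (an infinite coclique) does not ``contradict'' it --- it just means the hypothesis of that theorem fails, and indeed in the legitimate case $2\in\fm$ the part containing $0$ is exactly the (possibly infinite) ideal $\fm$. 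A further concrete error: in your configuration for the case $2\in U(R)$, $\fm\neq 0$, you assert $x\not\sim 1-x$ ``since $1\notin U(R)$''; but $x+(1-x)=1$ is always a unit, so $x\sim 1-x$ and the four vertices $0,x,1,1-x$ produce no violation of the partition at all. Finally, the step you yourself flag as ``the heart of the argument'' (that $G$ of a nontrivial product or of $M_n(D)$, $n\geqslant 2$, is never complete multipartite) is left unproved.

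For comparison, the paper avoids all of this by working directly with the partition, with no finiteness reduction: the part containing $0$ must be $R\setminus U(R)$, so for every non-unit $x$ the vertex $1-x$ (adjacent to $x$, and distinct from it) is a unit; if there were two distinct maximal left ideals $\fm_1,\fm_2$, writing $1=\alpha+\beta$ with $\alpha\in\fm_1$, $\beta\in\fm_2$ gives the non-unit $\alpha$ with $\beta=1-\alpha\in U(R)$, a contradiction, so $R$ is local. Then, if $2\in U(R)$, each pair $\{x,-x\}$ lies in one part, and for $0\neq z\in\fm$ the elements $1+z$ and $-1-z$ lie in a common part different from that of $1$ (since $1+z\sim 1$ via $2+z\in U(R)$) while $-1-z\not\sim 1$ (their sum is $-z\in\fm$), a contradiction; hence $\fm=0$, $R$ is a division ring whose parts are the pairs $\{x,-x\}$, and finitely many parts force $R$ to be a finite field. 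If instead $2\in\fm$, the residue division ring has characteristic $2$, so $G(R/\fm)$ is complete; were $R/\fm$ infinite this would give an infinite clique in $G(R)$ by Lemma~\ref{l1}(a), contradicting $\omega(G(R))=r$, so $|R/\fm|=2^n$. You would need to replace your finiteness step and your $2\in U(R)$ configuration with arguments of this kind (or repair them in some other way) before the necessity direction can stand.
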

\begin{proof}
Let $G(R)$ be a complete $r$-partite graph and $V$ be the part containing zero. Thus $V=R \backslash U(R)$. For every $x \in R$, if $ x \not \in U(R)$, then $x \in V$ and moreover $1-x$ is adjacent to $x$ (Note that $1-x\neq x$, because if $2x=1$, then $x$ is a unit, a contradiction). Thus $1-x \in U(R)$. The above argument shows that for every $x \in R$, either $x \in U(R)$ or $1-x \in U(R)$.
Now, if $|\T{Max}_l(R)| \geqslant 2$, then consider two distinct maximal left ideals $\fm_1,\fm_2$. Then $\fm_1+\fm_2= R$ and $\alpha + \beta =1$, where $\alpha \in \fm_1$ and $\beta \in \fm_2$. Since $\beta = 1- \alpha$ and $ \alpha \not \in U(R)$, $\beta=1-\alpha \in U(R)$ which is a contradiction. So $R$ is a local ring with a unique maximal left ideal $\fm$. First suppose that $2 \in U(R)$. Hence for every $0\neq x \in R$, $x \neq -x$ and so $\{-x,x\}$ are contained in one part of $G(R)$. Without loss of generality assume that $\{-1,1\}\subseteq V_2$, where $V_1=R \backslash U(R),V_2, \ldots, V_r$ are parts of $G(R)$. We claim that $\fm=0$. Let $\fm \neq 0$ and $0 \neq z \in \fm$. Therefore $1+z$ and $-1-z$ are contained in some part $V_j$, $j \neq 1$. Note that $j \neq 2$, because $1+z$ is adjacent to 1. But $-1-z$ and 1 are not adjacent, a contradiction. Thus $\fm=0$. By Theorem 19.1 of \cite{La}, $R$ is a division ring and every part of $G(R)$ is of the form $\{-x,x\}$, for some $x \in R$. Since $G(R)$ is a complete $r$-partite graph, so by Theorem 13.1 of \cite{La}, $R$ is a finite field. Now,  suppose that $2 \in \fm$. If $R / \fm$ is infinite, then $ \omega(R / \fm)$ is infinite. Now, by Lemma \ref{l1}, Part(a), $\omega(R)$ is infinite. Since $G(R)$ is a complete $r$-partite graph, $\omega(G(R))=r$, a contradiction. Therefore $R / \fm$ is finite. Now, since $2 \in \fm$, $\T{char}(R / \fm)=2$ and this implies that $|R / \fm| = 2^n$, for some $ n \in \N$.

Conversely, let $(R,\fm)$ be a local ring with $|R/\fm|=2^n$, for some $n \in \mathbb{N}$. Since $R/\fm$ is a field with $\T{char}(R/\fm)=2$, $G(R/\fm)$ is a complete graph. Also, $2 \not \in U(R)$ implies that each coset $a+\fm$ is an independent set of $G(R)$. It is not hard to see that $G(R)$ is a complete $r$-partite graph that cosets of $R/\fm$ form a partition of $G(R)$. If $R$ is a finite field, then clearly $G(R)$ is a complete multipartite graph.
\end{proof}
%
In the following theorem we characterize all rings whose unit graphs are bipartite.
\begin{theorem} Let $R$ be a ring.
   \begin{itemize}
      \item[(a)] If $J(R)\neq0$ and $2\in U(R)$, then $G(R)$ is not a bipartite graph.
      \item[(b)] If $2\not\in U(R)$, then $G(R)$ is a bipartite graph if and only if $G(R/J(R))$ is a bipartite graph.
      \item[(c)] If $R$ is a semisimple left Artinian ring, then $G(R)$ is a bipartite graph if and only if either $R \cong \Z_3$ or $R$ contains a summand isomorphic to $\Z_2$.
   \end{itemize}
\end{theorem}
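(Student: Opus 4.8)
The plan is to treat the three parts separately; throughout, the engine is that a graph containing a triangle ($K_3$) is not bipartite, so most of the work consists of producing triangles. For (a) I would exhibit one directly: fix $0\neq z\in J(R)$; since $J(R)$ is a two-sided ideal and $1+w\in U(R)$ for every $w\in J(R)$, we get $1+z\in U(R)$ and $2+z=2(1+2^{-1}z)\in U(R)$, while $0+1=1\in U(R)$, and $0,1,1+z$ are pairwise distinct ($z\neq0$, and $z=-1$ would put $z$ in $U(R)$); hence $\{0,1,1+z\}$ is a triangle and $G(R)$ is not bipartite. Part (b) is a restatement of Lemma~\ref{l1}(c): since $2\notin U(R)$ that lemma gives $\chi(G(R))=\chi(G(R/J(R)))$, and since $R/J(R)\neq0$ both graphs contain the edge $\{0,1\}$, so each is bipartite precisely when its chromatic number is $2$ --- and the two chromatic numbers agree. (The implication ``$G(R)$ bipartite $\Rightarrow G(R/J(R))$ bipartite'' needs no hypothesis on $2$, as an odd cycle in $G(R/J(R))$ lifts coordinatewise to one in $G(R)$, an element of $R$ lying over a unit of $R/J(R)$ being a unit; I would note this to show where $2\notin U(R)$ really enters.)

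For (c), write $R\cong M_{n_1}(D_1)\times\cdots\times M_{n_k}(D_k)$ by Artin--Wedderburn. ``$\Leftarrow$'': $G(\Z_3)$ has edge set $\{\{0,1\},\{0,2\}\}$ (as $1+2=0\notin U(\Z_3)$), a path, hence bipartite; and if $R\cong\Z_2\times S$ (possibly $S=0$), then $(a,s)+(b,t)\in U(R)$ forces $a\neq b$ in $\Z_2$, so every edge of $G(R)$ joins $\{0\}\times S$ to $\{1\}\times S$ and $G(R)$ is bipartite. ``$\Rightarrow$'': suppose $G(R)$ is bipartite and $R$ has no summand isomorphic to $\Z_2$; call a ring $S$ \emph{good} if there are $x,y,z\in S$ (not necessarily distinct) with $x+y,\,y+z,\,z+x\in U(S)$. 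I would establish: (i) every factor $M_{n_i}(D_i)$ is good --- take $x=y=z=1$ when $\T{char}(D_i)\neq2$ (so $2\in U$), and when $\T{char}(D_i)=2$ the factor is not $\Z_2$ and its unit graph contains a triangle, hence it is good; (ii) if some factor's unit graph has a triangle $\{a,b,c\}$, then pairing $\{a,b,c\}$ coordinatewise with goodness-witnesses in the other factors yields three pairwise-distinct elements of $R$ whose pairwise sums are units, i.e.\ a triangle in $G(R)$.

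By (i) and (ii), if any factor's unit graph had a triangle, $G(R)$ would not be bipartite; so none does. A short case check then pins down each factor: for $n_i\geq2$ one produces a triangle in $G(M_{n_i}(D_i))$, while for $n_i=1$ with $|D_i|\geq4$ one has either $\T{char}(D_i)=2$, whence $G(D_i)=K_{|D_i|}$ contains a triangle, or $\T{char}(D_i)\neq2$, whence $\{0,1,u\}$ is a triangle for any $u\notin\{0,1,-1\}$. So every factor is $\Z_2$ or $\Z_3$; having no $\Z_2$ summand, $R\cong\Z_3^{\,k}$. Finally, if $k\geq2$ the vertices $(0,1,1,\dots,1),(1,0,1,\dots,1),(1,1,\dots,1)$ form a triangle in $G(\Z_3^{\,k})$ --- each pairwise sum has all coordinates in $\{1,2\}=U(\Z_3)$, and the first two coordinates show the vertices are distinct --- contradicting bipartiteness; hence $k=1$ and $R\cong\Z_3$.

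The step I expect to be the main obstacle is the claim (used in (i) and in the case check) that $G(M_n(D))$ contains a triangle whenever $n\geq2$. When $\T{char}(D)\neq2$ one may take $\{0,\,I_n,\,I_n+E_{12}\}$ for a matrix unit $E_{12}$: $I_n+E_{12}$ is unipotent and $2I_n+E_{12}=2(I_n+2^{-1}E_{12})$ is a unit times a unipotent, so both are invertible. In characteristic $2$ this collapses, and the naive embedding $M_2(D)\times D^{\,n-2}\hookrightarrow M_n(D)$ is useless when $D\cong\Z_2$ (the factor $\Z_2$ is itself not good); instead I would work inside the prime subring $M_n(\Z_2)\subseteq M_n(D)$ and take $\{0,\,I_n,\,I_n+v\}$, where $v$ is the companion matrix of $x^n+x^{n-1}+1\in\Z_2[x]$. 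Then $v$ and $I_n+v$ have determinant $1$ over $\Z_2$ (the polynomial has constant term $1$ and value $1$ at $x=1$), hence lie in $U(M_n(\Z_2))\subseteq U(M_n(D))$, while $I_n+(I_n+v)=v\in U(M_n(D))$, and $0,I_n,I_n+v$ are distinct. Everything else --- lifting units modulo $J(R)$, the two bipartition arguments, and the distinctness of the assembled triples (immediate, the distinguished factor already separating $a,b,c$) --- is routine.
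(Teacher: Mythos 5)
Your proof is correct, and parts (a) and (b) coincide with the paper's argument (the paper uses the triangle $0,\,1,\,1-x$ for (a) and cites Lemma \ref{l1}(c) for (b); your aside that one implication in (b) needs no hypothesis on $2$ is also right). In part (c) the overall strategy is the same as the paper's --- Artin--Wedderburn decomposition, triangles in the ``large'' simple factors, assembled coordinatewise into a triangle of $G(R)$ --- but your implementation differs in two ways worth noting. First, where the paper exhibits explicit $2\times 2$ and $3\times 3$ matrices and handles general $n\geqslant 2$ by block decomposition, you give a uniform triangle in $G(M_n(D))$: $\{0,\,I,\,I+E_{12}\}$ in characteristic $\neq 2$, and in characteristic $2$ the set $\{0,\,I,\,I+v\}$ with $v$ the companion matrix of $x^n+x^{n-1}+1$ over the prime subfield; your determinant computations ($p(0)=p(1)=1$) are correct, and invertibility over $\Z_2$ does transfer to $M_n(D)$ because the inverse already lies in the subring $M_n(\Z_2)$. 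Second, your ``good ring'' device (witnesses $x,y,z$ with pairwise unit sums, not necessarily distinct) replaces the paper's two-case split (no $\Z_2$ or $\Z_3$ summand, versus a $\Z_3$ summand present but no $\Z_2$): a $\Z_3$ factor is good via $x=y=z=1$ even though $G(\Z_3)$ is triangle-free, and this is exactly what lets you pad a triangle from one distinguished factor across the whole product. A small bonus of your route: you treat $R\cong \Z_3^{\,l}$ with $l\geqslant 2$ explicitly via the triangle $(0,1,\ldots,1),(1,0,1,\ldots,1),(1,\ldots,1)$, whereas the paper's displayed triangle presupposes at least one matrix factor, so that corner case is only implicit there. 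Both arguments establish the same statement; yours trades the paper's concrete matrices for slightly more systematic bookkeeping.
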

\begin{proof}
    \begin{itemize}
      \item[(a)] Let $0 \neq x \in J(R)$. Then 0, 1 and $1-x$ form a cycle.
      \item[(b)] This follows directly from Lemma \ref{l1}, Part(c).
      \item[(c)] If $R \cong \Z_3$, then $G(R)$ is a bipartite graph. Hence we can assume that $R \not \cong \Z_3$. Since $G(\Z_2)$ is a bipartite graph, it is easy to check that if $R$ contains a summand isomorphic to $\Z_2$, then $G(R)$ is a bipartite graph. Suppose that $R$ does not contain a summand isomorphic to $\Z_2$ or $\Z_3$. Now, by Artin-Wedderburn Theorem,  we have $R \cong M_{n_1}(D_1) \times \cdots  \times M_{n_k}(D_k)$, where $n_i \in \N$ and $D_i$ is a division ring, for $i=1,\ldots,k$. Let $S=M_n(D)$, where either $n\geqslant 2$ is a natural number and $D$ is a division ring or $n=1$ and $|D| \geqslant 4$. We show that $G(S)$ contains a triangle. Using block decomposition, it is sufficient to show that the assertion holds for $n=2,3$. If $n=2$,
then $A =\left(
          \begin{array}{cc}
             -1 & 1 \\
              0 & 1 \\
          \end{array}
        \right)$,
     $B=\left(
         \begin{array}{cc}
                   1 & 0 \\
                   1 & 1 \\
           \end{array}
          \right)$
and zero matrix form a cycle. In the case $n=3$,
$A=\left(
 \begin{array}{ccc}
   1 & 0 & 0 \\
    0 & -1 & 1 \\
     0 & 0 & 1 \\
      \end{array}
       \right)$,
 $B=\left(
  \begin{array}{ccc}
     -1 & 1 & 0 \\
       0 & 1 & 0 \\
       1 & 1 & 1 \\
        \end{array}
         \right)$
and zero matrix form a cycle. Moreover, if $n=1$ and $|D| \geqslant 4$, then there exist non-zero elements $x$ and $y$ such that $x+y \neq 0$. Hence $G(S)$ contains a triangle. Thus $G(R)$ contains a triangle and so $G(R)$ is not bipartite.

Now, suppose that $R$ has no summand isomorphic to $\Z_2$ and $R$ contains a summand isomorphic to $\Z_3$. Thus $$R \cong (\Z_3)^l \times M_{n_1}(D_1) \times \cdots \times M_{n_k}(D_k),$$ where either $l \in \N$ and $n_i \geqslant 2$ or $n_i=1$ and $|D_i|\geqslant4$, for $i=1,\ldots,k$. By the above argument for every $i=1,\ldots,k$, there exists a triangle $\alpha_i-\beta_i-\gamma_i-\alpha_i$ in $G(M_{n_i}(D_i))$.
Thus $$(0,\alpha_1,\ldots,\alpha_k)-(1,\beta_1,\ldots,\beta_k)-(1,\gamma_1,\ldots,\gamma_k)-(0,\alpha_1,\ldots,\alpha_k)$$  is a triangle in $G(R)$ which implies that $G(R)$ is not bipartite. This completes the proof.
   \end{itemize}
\end{proof}

\noindent \textbf{Acknowledgement.} The authors are indebted to the School of Mathematics, Institute for Research in Fundamental Sciences (IPM) for support. The research of the first author was in part supported by a grant from IPM (No. 91050212).

\end{document}